\newtheorem{theorem}{Theorem}[section]
\newtheorem{thm}[theorem]{Theorem}
\newtheorem{lemma}[theorem]{Lemma}
\newtheorem{cor}[theorem]{Corollary}
\newtheorem{prop}[theorem]{Proposition}
\theoremstyle{definition}
\newtheorem*{ques*}{Question}
\numberwithin{equation}{section}
\title{Characterising acylindrical hyperbolicity via permutation actions}
\author[U. Bader]{Uri Bader}
\address{Weizmann Institute of Science, Israel}
\email{uri.bader@gmail.com}
\author[A. Sisto]{Alessandro Sisto}
	\address{Maxwell Institute and Department of Mathematics, Heriot-Watt University, Edinburgh, UK}
	\email{a.sisto@hw.ac.uk}
\begin{document}

\begin{abstract}
    We characterise acylindrical hyperbolicity of a group in terms of properties of an action of the group on a set (without any extra structure). In particular, this applies to the action of the group on itself by left multiplication, as well as the action on a (full measure subset of the) Furstenberg-Poisson boundary.
\end{abstract}

\maketitle

\section{Introduction}

The goal of this note is to characterise acylindrical hyperbolicity of a group $G$ in terms of the action of $G$ on itself, or on some other set. We do so by defining the notion of \emph{axial element} for an action (on a set) which is inspired by the notion of WPD element for an action on a hyperbolic space, as first considered in \cite{BF:WPD}.

Consider an action of the group $G$ on the non-empty set $X$ and fix an element $g\in G$ and a fundamental domain $D$ for the $\langle g \rangle$-action on $X$.
A subset of $X$ is called \emph{bounded} if it is contained in a finite union of translates of $D$ by elements of $\langle g \rangle$ and it is called \emph{unbounded} otherwise.
An element of $G$ is called \emph{tame} if it translates every bounded subset of $X$ into a bounded subset.
The pair $(g,D)$ is said to be an \emph{axial pair} for the action of $G$ on $X$ if the following are satisfied.
\begin{description}
    \item[Axiom 1] The subset of tame elements $t \in G$ such that $tD\cap D\neq \emptyset$ is finite.
    \item[Axiom 2] For every $h\in G$ there exists a bounded subset $B\subset X$ such that for every element $w \in G$ for which $hwD$ is unbounded, $X=wB\cup g^nB$ for some $n\in\mathbb{Z}$.
\end{description}
The element $g\in G$ is said to be an \emph{axial element for the action of $G$ on $X$} if there exists a fundamental domain $D\subset X$ such that the pair $(g,D)$ is an axial pair
and it is said to be an \emph{axial element} in $G$ if it is axial for the left regular action of $G$.


Recall that $g\in G$ is said to be a \emph{generalised loxodromic} element
if there is an acylindrical action of $G$ on a hyperbolic space where $g$ acts loxodromically.
Our main theorem gives equivalent characterisations of generalised loxodromicity. We refer to a probability measure as admissible if the semigroup generated by its support is the whole group.

\begin{thm}
\label{thm:main}
Let $G$ be a finitely generated, non-virtually cyclic group, and let $g\in G$. Then the following are equivalent.
\begin{enumerate}
    \item $g$ is generalised loxodromic.
    \item $g$ is axial for some action of $G$ on a set.
    \item $g$ is an axial element in $G$.
    \item $g$ is axial for the action of $G$ on some full-measure subset of its Furstenberg-Poisson boundary for some (any) admissible measure, with respect to a measurable fundamental domain.
    \end{enumerate}
\end{thm}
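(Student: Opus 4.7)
The implications $(3)\Rightarrow(2)$ and $(4)\Rightarrow(2)$ are immediate by taking the ``some action'' of $(2)$ to be the relevant one. The substantive content of the theorem therefore lies in $(1)\Rightarrow(3)$, $(1)\Rightarrow(4)$, and $(2)\Rightarrow(1)$.

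\textbf{Producing axial pairs from loxodromicity: $(1)\Rightarrow(3)$ and $(1)\Rightarrow(4)$.} Suppose $g$ is generalised loxodromic, witnessed by an acylindrical action $G \acts Y$ on a hyperbolic space in which $g$ is loxodromic (hence WPD). Fix a base point $x_0$ close to the $g$-quasi-axis $A$, let $\pi_A$ denote coarse closest-point projection to $A$, and fix a fundamental interval $\sigma\subset A$ for the $\langle g \rangle$-action on $A$. For $(3)$ I would take
\[ D \;:=\; \{h\in G : \pi_A(hx_0) \in \sigma\}, \]
which is a fundamental domain for $\langle g \rangle$ acting on $G$ by left multiplication since $\pi_A$ is $\langle g \rangle$-equivariant. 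Under this choice, bounded subsets of $G$ are exactly those whose orbit under $x_0$ projects to a bounded sub-arc of $A$, and tame elements coincide up to finite ambiguity with the elementary closure $E(g)$. Axiom~1 then reduces to the fact, a direct combination of WPD with $E(g)$ being virtually cyclic, that only finitely many elements of $E(g)$ coarsely fix a compact sub-arc of $A$. For Axiom~2, unboundedness of $hwD$ forces $hwA$ to be transverse to $A$, and a ping-pong style argument then covers $G$ by $wB \cup g^n B$ for a single bounded $B$ depending only on $h$. The same scheme goes through for $(4)$ with the $G$-action on the Furstenberg--Poisson boundary: admissibility of $\mu$ and non-virtual-cyclicity of $G$ ensure positive drift on $A$, so that $\pi_A$ extends almost everywhere to a measurable projection to $A\cup\partial A$, and one takes the preimage of $\sigma$ as the measurable fundamental domain.

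\textbf{The main direction: $(2)\Rightarrow(1)$.} My plan is to convert the axial pair $(g,D)$ into a projection complex in the sense of Bestvina--Bromberg--Fujiwara, and read off generalised loxodromicity from the resulting hyperbolic $G$-action. The natural setup is to take as ``axes'' the conjugates $k\langle g \rangle k^{-1}$, indexed by $k\in G/E(g)$, and to define the projection of one axis onto another using the $\langle g \rangle$-fibration of $G$ coming from $D$: the projection of $\ell\langle g\rangle\ell^{-1}$ onto $k\langle g\rangle k^{-1}$ should record the set of exponents $n$ for which $\ell\langle g\rangle\ell^{-1}$ meets $kg^n D$. Axiom~1 supplies boundedness of these projections together with the local finiteness needed for the projection axioms, while Axiom~2 is responsible for the Behrstock-type inequality, because it forces elements whose axes are ``transverse'' to $\langle g\rangle$ to witness unboundedness of the projection via the covering $X = wB\cup g^n B$. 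Once the projection complex is built, $g$ acts loxodromically on it by construction and with WPD (again from Axiom~1), and standard results upgrading WPD actions to acylindrical ones promote this to an acylindrical hyperbolic action in which $g$ is loxodromic.

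\textbf{Main obstacle.} The decisive step is $(2)\Rightarrow(1)$, and within it the extraction of the Behrstock-type inequality from Axiom~2: the axiom is stated as a set-theoretic covering condition on $X$, whereas the projection complex axioms demand quantitative control of how two conjugate axes project onto each other. Making this dictionary precise will require a careful analysis of which products $hw$ produce an unbounded $hwD$ and how the bounded set $B$ appearing in Axiom~2 controls the geometry of the corresponding pair of conjugate axes.
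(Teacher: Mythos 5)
Your overall architecture coincides with the paper's (closest-point projection to a quasi-axis plus a pull-back device for $(1)\Rightarrow(3),(4)$; a Bestvina--Bromberg--Fujiwara projection complex for $(2)\Rightarrow(1)$), but the decisive direction $(2)\Rightarrow(1)$ has genuine gaps, one of which you acknowledge yourself. First, your definition of the projection of $\ell\langle g\rangle\ell^{-1}$ onto $k\langle g\rangle k^{-1}$ as ``the set of exponents $n$ for which $\ell\langle g\rangle\ell^{-1}$ meets $kg^nD$'' is ill-posed for a general action: the conjugate is a subset of $G$ while $kg^nD$ is a subset of $X$, and even when $X=G$ ``meeting'' is not the right notion. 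The correct substitute is the set $I(w)=\{i:\ wg^iD\ \text{is unbounded}\}$, and it is Axiom 2 (applied with $h=e$), not Axiom 1, that bounds $\operatorname{diam} I(w)$ and hence gives axiom (P0); Axiom 1 instead is what makes the set $T$ of tame elements a \emph{virtually cyclic subgroup} (equal to the commensurator of $\langle g\rangle$), a fact you would need to prove from the axioms before you can even index your axes by $G/E(g)$. Second, the Behrstock inequality, which you explicitly leave as the ``main obstacle,'' is the real content here: it must be derived from Axiom 2 via coboundedness of the sets $wD_{[i(w)-2M,i(w)+2M]}$ and a comparison of $i(w)$ with $i(w^{-1})$. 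Third, and entirely absent from your sketch, is BBF axiom (P2) (only finitely many axes receive large projections from a fixed pair). This is where finite generation of $G$ is actually used: one proves subadditivity of the function $m(h)$ coming from Axiom 2, deduces that $w\mapsto i(hw)-i(w)$ changes by a bounded amount as $h$ moves along a word geodesic, and combines this with the Behrstock inequality in an injectivity argument bounding the number of exceptional cosets by $2|h|\cdot[T:\langle g\rangle]$. Without (P2) the projection complex does not exist, so ``local finiteness supplied by Axiom 1'' does not suffice.

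In the direction $(1)\Rightarrow(4)$ there are two smaller but real omissions: the measurable $G$-map from the Furstenberg--Poisson boundary to $\partial Z$ is a nontrivial input (Maher--Tiozzo, or boundary-theoretic results of Bader--Furman type), not a consequence of ``positive drift'' alone; and one must show that this map lands almost everywhere \emph{off} the countable set $G\cdot g^{+}\cup G\cdot g^{-}$, since the projection to the axis is undefined there --- the paper does this via (metric) ergodicity of the boundary together with the fact that the stabiliser of $\{g^{\pm}\}$ is proper. These are fixable, but they must be addressed for item (4) to make sense with a measurable fundamental domain.
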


By definition, a non-virtually cyclic group is acylindrically hyperbolic if and only if it contains a generalised loxodromic element (see \cite{Osin} for various characterisations of acylindrical hyperbolicity). We thus immediately get:

\begin{cor}
Let $G$ be a finitely generated, non-virtually cyclic group. Then the following are equivalent:
\begin{enumerate}
    \item $G$ is acylindrically hyperbolic.
    \item $G$ has an action on some set that admits an axial element.
    \item $G$ contains an axial element.
    \item $G$ contains an axial element for its action on some full-measure subset of its Furstenberg-Poisson boundary for some (any) admissible measure, with respect to a measurable fundamental domain.
\end{enumerate}
\end{cor}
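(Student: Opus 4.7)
The plan is that this corollary is a formal consequence of Theorem~\ref{thm:main}, obtained by existentially quantifying over group elements $g\in G$. I do not expect any genuine obstacle: the work is entirely absorbed into the proof of the main theorem, and what remains is bookkeeping of quantifiers.

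First I would invoke the well-known characterisation recorded in \cite{Osin}: a non-virtually cyclic group $G$ is acylindrically hyperbolic if and only if it contains a generalised loxodromic element. Combined with the standing hypothesis that $G$ is finitely generated and non-virtually cyclic, this rephrases condition (1) of the corollary as: there exists some $g\in G$ satisfying condition (1) of Theorem~\ref{thm:main}.

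Next, I would apply Theorem~\ref{thm:main} pointwise for each $g\in G$. The theorem gives, for every such $g$, a four-fold equivalence between being generalised loxodromic and the three axiality conditions (axial for some $G$-set, axial for the left regular action, axial for the Furstenberg-Poisson boundary action). Since ``there exists $g\in G$'' commutes with a logical equivalence holding for each $g$ separately, applying the quantifier $\exists g\in G$ to each of the four equivalent conditions produces precisely the four conditions of the corollary. For instance, condition (2) of the corollary is verbatim the statement ``$\exists g\in G$ axial for some action of $G$ on a set'', i.e.\ condition (2) of Theorem~\ref{thm:main} quantified over $g$; conditions (3) and (4) are similarly direct rewordings.

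The only minor point worth noting is the parenthetical ``some (any)'' appearing in condition (4), which subsumes two choices of admissible measure into a single clause. This does not cause any difficulty here, because the equivalence between the ``some'' and ``any'' versions is already built into condition (4) of Theorem~\ref{thm:main}, and is therefore inherited by the corollary without extra argument.
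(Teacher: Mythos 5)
Your proposal is correct and matches the paper's own (implicit) argument: the paper derives the corollary immediately from Theorem~\ref{thm:main} together with the fact that a non-virtually cyclic group is acylindrically hyperbolic if and only if it contains a generalised loxodromic element, exactly as you do by existentially quantifying over $g$.
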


\subsection*{Outline}
In Section \ref{sec:axial} we study axial elements, proving various properties reminiscent of those of loxodromic WPD elements. These allow us to use the Bestvina-Bromberg-Fujiwara construction \cite{BBF} to construct hyperbolic spaces on which axial elements are loxodromic, see Theorem \ref{thm:axial_to_lox}. This shows that any of items (2),(3),(4) of Theorem \ref{thm:main} implies item (1).

In Section \ref{sec:gen_lox} we show Proposition \ref{prop:lox_to_axial} which says that generalised loxodromic elements are axial for various actions, proving that item (1) of Theorem \ref{thm:main} implies items (2),(3),(4). We note that Proposition \ref{prop:lox_to_axial} also says that the loxodromic elements for a specific acylindrical action on a hyperbolic space $Z$ are each axial for the action of $G$ on $Z$ (regarded as a set), as well as for the action on a natural subset of the boundary.



\section{Axial elements are generalised loxodromics}
\label{sec:axial}

\subsection{Heuristics}

In this subsection we briefly explain some of the heuristics behind the definition of axial element. One setup that one might think of is a group $G$ acting on a hyperbolic space $X$, which we want to only regard as a set, with $g$ being a loxodromic WPD. The fundamental domain $D$ that we would like to consider is, roughly, the set of points that closest-point project to an axis of $g$ to a fixed fundamental domain there. Now, it is known that $g$ is contained in a unique maximal virtually cyclic subgroup $E(g)$, and the definition of tame element is designed to capture the elements of said subgroup. Axiom (1) is the interpretation of $E(g)$ being virtually cyclic. Axiom (2) has a more involved relationship with properties of $g$ (and indeed, it will take some work to prove it in Proposition \ref{prop:lox_to_axial} below). As it will become clearer below, it simultaneously capture two facts. First, it captures the fact that translates of an axis of $g$ by elements outside of $E(g)$ have uniformly bounded projection to the axis. Secondly, it captures the fact that, given two translates of an axis of $g$, there are only finitely many translates "in between" the two.

\subsection{Properties of axial elements}

In this section we fix an action of the group $G$ on a non-empty set $X$.
We note that if $G$ is finite then any of its elements is axial in a trivial way.
We thus assume that $G$ is infinite.
We fix once and for all an axial pair $(g,D)$.

It follows from Axiom (1) that $g$ is of infinite order (indeed, if not then every element of $G$ would be tame, and every tame element $t$ can be multiplied on the left by a power of $g$ to obtain an element as in Axiom (1)).
We denote $D_i=g^iD_0$, in particular $D_0=D$, and get a partition of $X$,
$X=\coprod_{i\in \mathbb{Z}} D_i$.
We let an \emph{interval} be a subset of $X$ of the form $D_{[a,b]}=\cup_{i=a}^b D_i$,
for $a\leq b$, and note that a subset of $X$ is bounded if and only if it is contained in some interval.
We rewrite axiom (2) as follows: 

\medskip
(2') For every $h\in G$ there exists $m\in\mathbb{Z}$ such that for every element $w \in G$ for which $hwD$ is unbounded we have $X=wD_{[-m,m]}\cup g^nD_{[-m,m]}$ for some $n\in\mathbb{Z}$.
\medskip

Given $h\in G$ we let $m(h)$ be the minimal $m\in \mathbb{N}$ satisfying (2').
We note that for a tame element $t\in G$ we have $m(th)=m(h)$, since then $hwD$ is unbounded if and only if $thwD$ is unbounded.
We observe that $m:G\to \mathbb{N}$ is subadditive.

\begin{lemma}
For $h_1,h_2\in G$, $m(h_1h_2)\leq m(h_1)+m(h_2)$.
\end{lemma}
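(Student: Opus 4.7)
The plan is to apply axiom (2') twice: once to $h_1$ with the auxiliary element $h_2 w$, and then a second time to $h_2$ with a shifted element of the form $w g^{i_0}$. Write $m_1 = m(h_1)$, $m_2 = m(h_2)$, and $m = m_1 + m_2$; I would fix a witness $w \in G$ for which $h_1 h_2 w D$ is unbounded and aim to produce some $n \in \mathbb{Z}$ with $X = w D_{[-m, m]} \cup g^{n} D_{[-m, m]}$, which is exactly the condition witnessing $m(h_1 h_2) \leq m$.

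First I would apply (2') to $h_1$ with element $h_2 w$, which is legitimate since $h_1(h_2 w)D = h_1 h_2 w D$ is unbounded; this yields
\[ X = h_2 w D_{[-m_1, m_1]} \cup g^{n_1} D_{[-m_1, m_1]} \]
for some $n_1 \in \mathbb{Z}$. The key observation is that $X$ itself is unbounded (because $g$ is of infinite order, as established earlier in the section) while $g^{n_1} D_{[-m_1, m_1]}$ is a bounded interval, so the finite union
\[ h_2 w D_{[-m_1, m_1]} = \bigcup_{i=-m_1}^{m_1} h_2(w g^i) D \]
must contain an unbounded term. Consequently, there exists $i_0$ with $|i_0| \leq m_1$ such that $h_2(w g^{i_0}) D$ is unbounded.

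Now (2') can be invoked a second time, applied to $h_2$ with element $w g^{i_0}$, giving
\[ X = w g^{i_0} D_{[-m_2, m_2]} \cup g^{n_2} D_{[-m_2, m_2]} \]
for some $n_2 \in \mathbb{Z}$. The first piece equals $w D_{[i_0 - m_2, i_0 + m_2]}$, which is contained in $w D_{[-m, m]}$ because $|i_0| \leq m_1$, and the second piece clearly lies in $g^{n_2} D_{[-m, m]}$. Enlarging both pieces yields $X = w D_{[-m, m]} \cup g^{n_2} D_{[-m, m]}$, as desired.

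The only non-mechanical step is the production of the index $i_0$, which forces the cascade of applications: without shifting to $w g^{i_0}$, (2') applied only to $h_2 w$ would yield a covering expressed in terms of $h_2 w$-translates, not the $w$-translates the definition of $m(h_1 h_2)$ requires. That identification relies in turn on the unboundedness of $X$, i.e.\ on $g$ having infinite order, a feature already extracted from Axiom (1) at the start of the section.
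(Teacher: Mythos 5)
Your proof is correct and follows essentially the same route as the paper's: apply (2') first to $h_1$ with auxiliary element $h_2w$, extract an index $j=i_0\in[-m_1,m_1]$ for which $h_2wg^{i_0}D$ is unbounded, apply (2') again to $h_2$ with $wg^{i_0}$, and absorb the shift by $i_0$ into the enlarged interval $[-m,m]$. The only cosmetic difference is that you phrase the conclusion of (2') as a covering of $X$ rather than as containment of a cobounded set, and you spell out the (correct) justification that $X$ itself is unbounded; the paper leaves this implicit.
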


\begin{proof}
We denote $m_i=m(h_i)$. 
Fix $w \in G$ for which $h_1h_2wD$ is unbounded.
Then (using (2') with $h_1$ and $h_2w$ replacing $h$ and $w$) we have that $h_2wD_{[-m_1,m_1]}$ contains $X-g^{n_1}D_{[-m_1,m_1]}$ for some $n_1\in\mathbb{Z}$
and in particular it is unbounded.
It follows that there exists $j\in [-m_1,m_1]$ such that $h_2wD_j=h_2wg^jD$ is unbounded.
Then $wg^jD_{[-m_2,m_2]}$ contains $X-g^{n_2}D_{[-m_2,m_2]}$ for some $n_2\in\mathbb{Z}$.
Writing $m=m_1+m_2$, we deduce that 
\[ wD_{[-m,m]} \supseteq wg^jD_{[-m_2,m_2]} \supseteq X-g^{n_2}D_{[-m_2,m_2]}\supseteq X-g^{n_2}D_{[-m,m]}, \]
thus indeed $m(h_1h_2)\leq m=m_1+m_2$.
\end{proof}

We denote by $T\subseteq G$ the subset of tame elements (we will see soon that this is a subgroup)
and by $F\subset T$ its finite subset $\{t\in T \mid tD\cap D\neq \emptyset\}$.
We denote $W=G-T$ and call the elements of $W$ \emph{wild} elements.
We denote $M=m(e)$. 

The following lemma, in the setup of the heuristic discussion above, is best interpreted as saying that the $w^{-1}$-translate of the axis of $g$ has bounded projection to the axis of $g$.

\begin{lemma} \label{lem:h=e}
For $w\in W$ there exists $i\in \mathbb{Z}$ such that $wD_i$ is unbounded.
For every such $i$, $wD_{[i-M,i+M]}$ contains $X-D_{[a,b]}$ for some $a,b\in\mathbb{Z}$ 
with $b-a= 2M$. 
\end{lemma}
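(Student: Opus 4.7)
The plan is to treat the two assertions separately. Both turn out to be essentially a direct unpacking of the definitions established so far, with Axiom~(2') doing the real work in the second.

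For the first assertion, I would argue straight from the definition of wild. Since $w$ is not tame, there exists a bounded $B\subseteq X$ with $wB$ unbounded. Any bounded set lies in some interval, say $B\subseteq D_{[a,b]}$, so $wD_{[a,b]}=\bigcup_{j=a}^{b}wD_j$ contains the unbounded set $wB$ and is therefore itself unbounded. Since a finite union of bounded sets is bounded, at least one of the $wD_j$ with $j\in[a,b]$ must be unbounded, which yields the required $i$.

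For the second assertion, fix $i$ with $wD_i$ unbounded. The key move is to apply Axiom~(2') with $h=e$ (so $m(h)=M$) to the element $w'=wg^i$. Since $w'D=wg^iD=wD_i$ is unbounded by hypothesis, Axiom~(2') produces $n\in\mathbb{Z}$ with
\[ X = w'D_{[-M,M]} \cup g^n D_{[-M,M]}. \]
Using the elementary identity $g^k D_{[p,q]}=D_{[p+k,q+k]}$ (immediate from $D_j=g^jD$), this rewrites as $X = wD_{[i-M,i+M]} \cup D_{[n-M,n+M]}$. Setting $a=n-M$ and $b=n+M$ gives $b-a=2M$ and the desired inclusion $wD_{[i-M,i+M]}\supseteq X-D_{[a,b]}$.

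I do not anticipate any real obstacle: the lemma is essentially Axiom~(2') rewritten in the case $h=e$, and the first part simply records that a wild element must send some piece of the $g$-orbit of $D$ to an unbounded set. The only small point to be careful about is the index shift $g^k D_{[p,q]}=D_{[p+k,q+k]}$, which is what lets Axiom~(2'), phrased around $D$, yield information about unbounded translates $wD_i$ with $i\neq 0$.
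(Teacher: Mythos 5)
Your proof is correct and follows essentially the same route as the paper: the first part unpacks the definition of wild via an interval containing the bounded witness set, and the second part is Axiom~(2') with $h=e$ applied to $wg^i$, using the index shift $g^kD_{[p,q]}=D_{[p+k,q+k]}$. The paper's own proof is just a terser version of exactly this argument.
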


\begin{proof}
As in the previous proof, if the $w$-image of an interval is unbounded, then the $w$-image of one of the fundamental domains that it contains is unbounded as well. The second part of the statement is Axiom (2') for the case $h=e$, applied for $wg^i$, where $a=k-M$ and $b=k+M$.
\end{proof}

A subset of $X$ is called \emph{cobounded} if its complement is bounded.

\begin{lemma} \label{lem:wildiswild}
An element $w\in G$ is wild if and only if there exists a bounded subset $B\subset X$ such that $wB$ is cobounded.
\end{lemma}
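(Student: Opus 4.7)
The plan is to establish each direction separately, both with short arguments, leaning on Lemma \ref{lem:h=e} for the ``only if'' direction and on the observation that $X$ itself is unbounded for the ``if'' direction.

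For the forward implication, I would start from the fact that $w\in W$. Lemma \ref{lem:h=e} then provides an index $i\in\mathbb{Z}$ such that $wD_i$ is unbounded, and moreover it asserts that $wD_{[i-M,i+M]}$ contains $X-D_{[a,b]}$ for some $a,b\in\mathbb{Z}$ with $b-a=2M$. Setting $B=D_{[i-M,i+M]}$, we see that $B$ is bounded by definition (it is a single interval), while $wB\supseteq X-D_{[a,b]}$ is cobounded since its complement is contained in the interval $D_{[a,b]}$.

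For the converse, I would argue by contradiction. First I would record the fact that $X$ itself is unbounded: since $g$ has infinite order (noted just after the restatement of Axiom~(2) as~(2')) the partition $X=\coprod_{i\in\mathbb{Z}}D_i$ is into infinitely many non-empty pieces, so no interval $D_{[a,b]}$ can cover $X$. Next I would use that a finite union of bounded subsets is bounded (if each summand is contained in $D_{[a_j,b_j]}$, the whole union is contained in $D_{[\min a_j,\max b_j]}$). Now suppose, towards a contradiction, that $w$ is tame while some bounded $B\subset X$ has $wB$ cobounded. By tameness, $wB$ is bounded; by coboundedness, $X-wB$ is bounded. Then $X=wB\cup(X-wB)$ is a union of two bounded subsets, hence bounded, contradicting the previous paragraph. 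Therefore $w$ must be wild, completing the proof.

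The main obstacle here is conceptual rather than technical: one has to notice that Lemma \ref{lem:h=e} already packages exactly what is needed for the forward direction, so there is no genuine calculation left. I would therefore keep the write-up brief and simply cite Lemma \ref{lem:h=e} in the first direction and the unboundedness of $X$ in the second.
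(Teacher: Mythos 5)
Your proof is correct and follows the same route as the paper: the wild-implies-cobounded direction is obtained from Lemma \ref{lem:h=e} with $B=D_{[i-M,i+M]}$ exactly as in the paper, and your converse simply spells out the contradiction (tameness of $w$ plus unboundedness of $X$) that the paper dismisses as clear.
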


\begin{proof}
The "if" part is clear and the "only if" part follows from Lemma~\ref{lem:h=e}
by setting $B=D_{[i-M,i+M]}$.
\end{proof}

The following lemma describes the same virtually cyclic subgroup $E(g)$ that was mentioned in the heursitic discussion above.

\begin{lemma}
\label{lem:virt_cyclic}
The subset $T\subset G$ is a virtually cyclic subgroup and it coincides with the commensurating subgroup of the subgroup $\langle g \rangle$ in $G$.
\end{lemma}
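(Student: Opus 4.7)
The plan is to prove the lemma in four steps: $T$ is a subgroup, $T$ is virtually cyclic with $\langle g\rangle$ of finite index, $T$ is contained in the commensurating subgroup $C$ of $\langle g\rangle$, and finally $C\subseteq T$.

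Closure of $T$ under products is immediate from the definition of tameness. For closure under inverses, suppose $t\in T$ but $t^{-1}$ is wild; Lemma~\ref{lem:wildiswild} then yields a bounded $B$ with $t^{-1}B$ cobounded, so that $X=t^{-1}B\cup A$ with $A$ bounded. Applying $t$ gives $X=B\cup tA$, which is bounded since $t$ is tame and $B,A$ are bounded, contradicting the fact that $X=\coprod_i D_i$ is unbounded. For virtual cyclicity, given $t\in T$ the bounded nonempty set $tD$ meets some $D_i$, so $g^{-i}t\in F$, whence $T=\langle g\rangle\cdot F$ and $[T:\langle g\rangle]\leq|F|<\infty$. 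For $T\subseteq C$, conjugation by $t\in T$ is an automorphism of $T$ preserving finite index, so $\langle g\rangle$ and $t\langle g\rangle t^{-1}$ are both subgroups of $T$ of finite index, and hence their intersection is of finite index in each of them.

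The heart of the proof, and the main obstacle, is showing $C\subseteq T$. Given $h\in C$, pick positive integers $N,n$ with $hg^Nh^{-1}=g^n$; induction yields $hg^{kN+r}=g^{kn}hg^r$ and so $hD_{kN+r}=g^{kn}hD_r$ for all $k\in\mathbb{Z}$ and $0\leq r<N$. Assume for contradiction that $h$ is wild. Lemma~\ref{lem:h=e} produces some $i$ with $hD_i$ unbounded; writing $i=kN+r$ and using that $g^{kn}$ is tame, the commutation identity forces $hD_r$ itself to be unbounded, and hence so is $hD_{kN+r}$ for every $k\in\mathbb{Z}$. The second part of Lemma~\ref{lem:h=e} then yields that $hD_{[kN+r-M,kN+r+M]}$ is cobounded for every such $k$.

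To conclude, choose $k_1,k_2$ with $|k_1-k_2|N>2M$, so that the intervals $I_j=[k_jN+r-M,k_jN+r+M]$ for $j=1,2$ are disjoint. Since $h$ acts as a bijection on $X$ and the $D_i$'s are pairwise disjoint, the sets $hD_{I_1}$ and $hD_{I_2}$ are disjoint, yet both cobounded; applying De~Morgan gives $X=X\setminus(hD_{I_1}\cap hD_{I_2})=(X\setminus hD_{I_1})\cup(X\setminus hD_{I_2})$, expressing $X$ as a union of two bounded sets and forcing $X$ to be bounded, a contradiction. Hence $h$ is tame, completing the proof.
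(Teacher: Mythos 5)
Your proof is correct and follows essentially the same four-step structure as the paper's (closure under products and inverses, virtual cyclicity via Axiom (1) and the finite set $F$, commensuration via finite index, and the converse inclusion by contradiction from a relation $hg^Nh^{-1}=g^{\pm n}$). The only real difference is in the execution of the last step --- the paper pulls a cobounded image back into the bounded set $B$ of Lemma~\ref{lem:wildiswild} to reach its contradiction, whereas you push the sets $D_{kN+r}$ forward and intersect two disjoint cobounded sets --- and the one small point to tidy is that the exponent $n$ can only be guaranteed to be a nonzero (possibly negative) integer, a harmless issue your argument (like the paper's) absorbs without change.
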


\begin{proof}
It is clear that $T$ is closed under taking products.
By Lemma~\ref{lem:wildiswild}, $W$ is closed under taking inverses:
for $w\in W$ and bounded $B\subset X$ such that $wB$ is cobounded,
$w^{-1}(X-wB)$ is unbounded, thus $w^{-1}\in W$.
It follows that $T$ is also closed under taking inverses, thus it is a subgroup of $G$.

From Axiom (1) and the fact that $g\in T$
we get that for every $t\in T$ there exists $n\in \mathbb{Z}$ such that $g^nt$ belongs to the finite subset $F$.
It follows that $T$ is virtually cyclic.
Since $T$ is virtually cyclic and $g$ is an element of infinite order, we get that $T$ normalizes a finite index subgroup of $\langle g \rangle$. In particular, $T$ commensurates $\langle g \rangle$.

We are left to show that the commensurating subgroup of $\langle g \rangle$ is contained in $T$.
To see this, we assume by contradiction that $w$ is a wild element
which commensurates $\langle g \rangle$.
We thus have, by Lemma~\ref{lem:wildiswild}, a bounded subset $B\subset X$ such that $wB$ is cobounded and $a,b\in \mathbb{N}$ such that $wg^aw^{-1}=g^b$.
Then the set $wB$ contains the sets $D_{nb}$ for infinitely many values of $n\in\mathbb{Z}$,
but for every such $n$,
$B\supseteq w^{-1} D_{nb}=w^{-1}g^{bn}D=g^{an}w^{-1}D$
and the union of the sets $g^{an}w^{-1}D$ is unbounded. This contradiction finishes the proof.
\end{proof}

We conclude that $G$ is virtually cyclic if and only if $G=T$.
For the rest of the section we assume that $G$ is not virtually cyclic, thus $W\neq\emptyset$.
We note that $W$ is stable under taking inverses and multiplying by tame elements from either side, as $T$ is a group.

For every $w\in W$ there exists $i\in \mathbb{Z}$ such that $wD_i=wg^iD$ is unbounded.
We denote the set of such $i$'s by $I(w)$.
This is where $w$ gets really wild.
It turns out that these sets are uniformly bounded. Heuristically, $I(w)$ determines the projection of the $w^{-1}$-translate of the axis of $g$.

\begin{lemma} \label{lem:Ibounded}
For every $w\in W$, the diameter of $I(w)\subset \mathbb{Z}$ is bounded from above by $2M$.
\end{lemma}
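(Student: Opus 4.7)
The plan is to pick any index $i \in I(w)$, which exists by the first part of Lemma~\ref{lem:h=e}, and use the second part of that lemma to show that every other index in $I(w)$ must lie in $[i-M, i+M]$.

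More precisely, I would apply Lemma~\ref{lem:h=e} at the chosen $i \in I(w)$ to obtain integers $a,b$ with $b-a = 2M$ such that $wD_{[i-M, i+M]} \supseteq X - D_{[a,b]}$. Rearranging, this says the complement $X \setminus wD_{[i-M, i+M]}$ is contained in $D_{[a,b]}$, hence is bounded. Now suppose $j \in I(w)$ is any other index with $j \notin [i-M, i+M]$. Since $G$ acts by bijections on $X$ and $\{D_k\}_{k\in\mathbb{Z}}$ is a partition of $X$, the family $\{wD_k\}_{k\in\mathbb{Z}}$ is also a partition of $X$. In particular, $wD_j$ is disjoint from $wD_{[i-M, i+M]}$, so $wD_j \subseteq X \setminus wD_{[i-M, i+M]} \subseteq D_{[a,b]}$, which means $wD_j$ is bounded, contradicting $j \in I(w)$.

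Thus $I(w) \subseteq [i-M, i+M]$, and so $\diam(I(w)) \leq 2M$. There is no real obstacle here: the argument is a direct packaging of Lemma~\ref{lem:h=e} combined with the injectivity of the $G$-action on $X$ (so that distinct $D_k$'s give distinct $wD_k$'s). The only subtlety worth being explicit about is using that the $D_k$ partition $X$ to conclude disjointness of the $wD_k$, which is what forces an unbounded $wD_j$ to coincide with part of the unbounded region $wD_{[i-M, i+M]}$, and hence forces $j$ to lie in $[i-M, i+M]$.
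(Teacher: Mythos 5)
Your proof is correct and follows essentially the same route as the paper: both arguments rest on Lemma~\ref{lem:h=e} together with the fact that $\{wD_k\}_{k\in\mathbb{Z}}$ partitions $X$, the only (cosmetic) difference being that the paper intersects the two cobounded sets $wD_{[i-M,i+M]}$ and $wD_{[j-M,j+M]}$, while you intersect the single unbounded set $wD_j$ with the cobounded $wD_{[i-M,i+M]}$. Your variant in fact yields the slightly sharper containment $I(w)\subseteq[i-M,i+M]$, which more than suffices for the stated bound of $2M$.
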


\begin{proof}
Fix $w\in W$ and $i,j\in I(w)$.
By Lemma~\ref{lem:h=e}, the sets 
$wD_{[i-M,i+M]}$ and $wD_{[j-M,j+M]}$ are cobounded, thus intersect non-trivially.
It follows that the intervals $[i-M,i+M]$ and $[j-M,j+M]$
intersect non-trivially, thus $|i-j|\leq 2M$.
\end{proof}

We define the function $i:W\to \mathbb{Z}$ by
\[ i(w)=[(\min I(w)+\max I(w))/2]. \]
Note that $I(w)\subseteq [i(w)-M,i(w)+M]$ (though $i(w)$ might not lie in $I(w)$).
We think of $i(w)$ as the wilderness center of $w$.

The following lemmas build up to Lemma \ref{lem:ra}, which is the interpretation in our setup of the Behrstock inequality (Axiom (P1) of \cite{BBF}) in our context. Once one rewrites said inequality as we did in Lemma \ref{lem:ra}, it is natural to try to understand the quantities $|i(u)-i(v)|$:

\begin{lemma}
\label{lem:coarse_lip}
For every $u,v\in W$ we have $|i(u)-i(v)|\leq m(uv^{-1})+2M$.
\end{lemma}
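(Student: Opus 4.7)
The plan is to apply Axiom (2') with $h = uv^{-1}$ and a well-chosen $w$, and then combine the resulting coboundedness with Lemma \ref{lem:h=e} applied to $v$. The key observation is that for any $j \in I(u)$, the element $w = vg^j$ satisfies $hwD = ug^j D$, which is unbounded precisely because $j \in I(u)$; hence Axiom (2') is applicable with this choice.

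Setting $m = m(uv^{-1})$, Axiom (2') then gives that $wD_{[-m,m]} = vD_{[j-m,\,j+m]}$ is cobounded in $X$ for every $j \in I(u)$. On the other hand, Lemma \ref{lem:h=e} provides that $vD_{[k-M,\,k+M]}$ is cobounded in $X$ for every $k \in I(v)$. Since the intersection of two cobounded subsets of an infinite set is non-empty (the complement being the union of two bounded sets, hence bounded), we must have $[j-m, j+m] \cap [k-M, k+M] \neq \emptyset$, that is, $|j - k| \leq m + M$ for every $j \in I(u)$ and every $k \in I(v)$.

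To conclude, one uses that $\min I(u) \leq i(u) \leq \max I(u)$ by the definition of $i(u)$ as a (rounded) midpoint, and analogously for $v$. Thus $i(v) - i(u) \leq \max I(v) - \min I(u) \leq m + M$ by applying the previous bound with $j = \min I(u)$ and $k = \max I(v)$, and by symmetry $i(u) - i(v) \leq m + M$. Hence $|i(u) - i(v)| \leq m + M \leq m(uv^{-1}) + 2M$.

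The main obstacle is identifying the right $w$ to feed into Axiom (2'): once one sees that $w = vg^j$ for $j \in I(u)$ converts the axiom into a coboundedness statement about $v$-translates of intervals, the remainder is routine bookkeeping combining this Behrstock-style intersection argument with the midpoint structure of $I(u)$ and $I(v)$.
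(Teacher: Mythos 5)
Your proof is correct and follows essentially the same route as the paper: both hinge on applying Axiom (2') with $h=uv^{-1}$ and $w=vg^{j}$ for $j\in I(u)$, so that $hwD=ug^{j}D$ is unbounded and $vD_{[j-m,j+m]}$ is therefore cobounded. The only (harmless) difference is in the final bookkeeping — you intersect this cobounded set with the one from Lemma \ref{lem:h=e} applied to $v$, rather than extracting an unbounded translate $vg^{j+l}D$ directly, and in doing so you obtain the marginally sharper bound $m(uv^{-1})+M$.
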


\begin{proof}
Fix $i\in I(u)$. Then $|i-i(u)|\leq M$.
Set $m=m(uv^{-1})$.
Since $ug^iD=(uv^{-1})vg^iD$ is unbounded, we get that $vg^iD_{[-m,m]}$ is cobounded. 
It follows that for some $j\in [-m,m]$, $vg^iD_j=vg^{i+j}D$ is unbounded,
thus $i+j\in I(v)$ and therefore $|i+j-i(v)|\leq M$.
We conclude that $|i(u)-i(v)+j|\leq 2M$, thus indeed, $|i(u)-i(v)|\leq m+2M$.
\end{proof}

\begin{lemma} \label{lem:h=e2}
For every $w\in W$, $wD_{[i(w)-2M,i(w)+2M]}$ contains $X-D_{[a,b]}$ for some $a,b\in\mathbb{Z}$ 
with $b-a= 2M$.
\end{lemma}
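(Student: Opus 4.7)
The plan is to reduce this directly to Lemma \ref{lem:h=e} by observing that the interval centered at $i(w)$ of radius $2M$ contains the radius-$M$ interval around any point of $I(w)$.

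First I would choose any $i \in I(w)$; such an $i$ exists because $w$ is wild and hence, by Lemma \ref{lem:h=e}, some $wD_i$ is unbounded. By Lemma \ref{lem:h=e} applied to this $i$, there exist $a,b \in \mathbb{Z}$ with $b - a = 2M$ such that
\[ wD_{[i-M,\, i+M]} \supseteq X - D_{[a,b]}. \]

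Next I would use the definition of $i(w)$: since $I(w) \subseteq [i(w)-M,\, i(w)+M]$, the chosen $i$ satisfies $|i - i(w)| \leq M$, which immediately gives the inclusion of intervals
\[ [i-M,\, i+M] \subseteq [i(w)-2M,\, i(w)+2M]. \]
Therefore
\[ wD_{[i(w)-2M,\, i(w)+2M]} \supseteq wD_{[i-M,\, i+M]} \supseteq X - D_{[a,b]}, \]
which is the claim.

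There is no real obstacle here; the lemma is purely bookkeeping, trading the point-dependent radius $M$ around some $i \in I(w)$ (supplied by Lemma \ref{lem:h=e}) for the uniform radius $2M$ around the wilderness center $i(w)$. The role of the lemma is presumably to give a statement phrased entirely in terms of $i(w)$, which is the quantity controlled by the coarse Lipschitz estimate of Lemma \ref{lem:coarse_lip} and which will be the natural input to the Behrstock-type inequality mentioned before Lemma \ref{lem:ra}.
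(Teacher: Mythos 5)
Your proof is correct and is essentially identical to the paper's one-line argument: both apply Lemma \ref{lem:h=e} at some $i\in I(w)$ and then use $I(w)\subseteq[i(w)-M,i(w)+M]$ to enlarge the interval $[i-M,i+M]$ to $[i(w)-2M,i(w)+2M]$. Nothing to add.
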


\begin{proof} 
Follows immediately from Lemma~\ref{lem:h=e},
as for $i\in I(w)$, $D_{[i-M,i+M]}$ is contained in $D_{[i(w)-2M,i(w)+2M]}$.
\end{proof}

\begin{lemma} \label{lem:n+}
For every $w\in W$,
\[ wD_{[i(w)-4M,i(w)+4M]} \quad \mbox{contains} \quad X-D_{[i(w^{-1})-2M,i(w^{-1})+2M]}.  \]
\end{lemma}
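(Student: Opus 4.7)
The plan is to apply Lemma \ref{lem:h=e2} to both $w$ and $w^{-1}$, translate one of the inclusions by $w$, and then show the relevant integer intervals must overlap by a cobounded-sets argument.

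First, I would apply Lemma \ref{lem:h=e2} to $w^{-1}\in W$: there exist $a',b'\in\mathbb{Z}$ with $b'-a'=2M$ such that
\[ w^{-1}D_{[i(w^{-1})-2M,\,i(w^{-1})+2M]} \supseteq X - D_{[a',b']}. \]
Acting by $w$ on both sides (and using that $w$ is a bijection of $X$), this rewrites as
\[ wD_{[a',b']} \supseteq X - D_{[i(w^{-1})-2M,\,i(w^{-1})+2M]}. \qquad (\ast) \]
Next I would apply Lemma \ref{lem:h=e2} directly to $w$: there exist $a,b\in\mathbb{Z}$ with $b-a=2M$ such that $wD_{[i(w)-2M,\,i(w)+2M]} \supseteq X - D_{[a,b]}$, in particular this set is cobounded.

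The key step is to verify that $[a',b'] \subseteq [i(w)-4M,\,i(w)+4M]$, for then by $(\ast)$ we obtain
\[ wD_{[i(w)-4M,\,i(w)+4M]} \supseteq wD_{[a',b']} \supseteq X - D_{[i(w^{-1})-2M,\,i(w^{-1})+2M]}, \]
which is exactly the desired containment. To establish the key step, observe that by $(\ast)$ the set $wD_{[a',b']}$ is cobounded. Since $g$ has infinite order, $X$ is infinite, so any two cobounded subsets of $X$ meet (their complements are bounded, and a finite union of bounded sets cannot exhaust $X$). Hence $wD_{[a',b']} \cap wD_{[i(w)-2M,\,i(w)+2M]} \neq \emptyset$, and since $w$ is a bijection this forces $D_{[a',b']} \cap D_{[i(w)-2M,\,i(w)+2M]}\neq\emptyset$, i.e.\ the integer intervals $[a',b']$ and $[i(w)-2M,\,i(w)+2M]$ intersect. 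As $[a',b']$ has length $2M$ and meets an interval of length $4M$ centred at $i(w)$, an elementary check (using $b'=a'+2M$) gives $a'\geq i(w)-4M$ and $b'\leq i(w)+4M$, as required.

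I do not anticipate a serious obstacle here; the only subtle point is the cobounded-intersection step, which relies on the infiniteness of $X$ (and hence on $g$ having infinite order, already established from Axiom (1)). Everything else is bookkeeping with intervals of integers.
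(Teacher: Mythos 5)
Your proof is correct and follows essentially the same route as the paper's: apply Lemma \ref{lem:h=e2} to $w^{-1}$, translate by $w$ to get a cobounded set $wD_{[a',b']}$, intersect it with the cobounded set $wD_{[i(w)-2M,i(w)+2M]}$ to force the integer intervals to overlap, and conclude $[a',b']\subseteq[i(w)-4M,i(w)+4M]$. Your explicit justification that two cobounded sets must meet (via unboundedness of $X$) is a small point the paper leaves implicit, but otherwise the arguments coincide.
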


\begin{proof}
By Lemma~\ref{lem:h=e2} and the fact that the inverse of a wild element is also wild, the sets
$wD_{[i(w)-2M,i(w)+2M]}$ and $w^{-1}D_{[i(w^{-1})-2M,i(w^{-1})+2M]}$ are cobounded.
In fact, $w^{-1}D_{[i(w^{-1})-2M,i(w^{-1})+2M]}$ contains $X-D_{[a,b]}$
for some $a,b\in \mathbb{Z}$ with $b-a=2M$.
Hence $wD_{[a,b]}$ contains $X-D_{[i(w^{-1})-2M,i(w^{-1})+2M]}$.
In particular, $wD_{[a,b]}$ is cobounded, thus it intersects non-trivially the cobounded set $wD_{[i(w)-2M,i(w)+2M]}$.
It follows that 
\[ [a,b] \cap [i(w)-2M,i(w)+2M] \neq \emptyset, \]
hence
\[ [a,b]\subset [i(w)-4M,i(w)+4M]. \]
We conclude that $wD_{[a,b]}$ is contained in $wD_{[i(w)-4M,i(w)+4M]}$,
thus
\[ X-D_{[i(w^{-1})-2M,i(w^{-1}+2M)]} \subseteq wD_{[a,b]} \subseteq wD_{[i(w)-4M,i(w)+4M]}. \]
\end{proof}

As mentioned above, the following lemma will give us one the axioms from \cite{BBF}.

\begin{lemma} \label{lem:ra}
Assume $u,v\in W$ are such that also $u^{-1}v\in W$.
Then 
\[ \min \{|i(u)-i(v^{-1}u)|,|i(v)-i(u^{-1}v)|\} \leq 5M. \]
\end{lemma}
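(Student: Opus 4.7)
The plan is to argue one side of the $\min$ directly: show that if $|i(u)-i(v^{-1}u)|>5M$, then $|i(v)-i(u^{-1}v)|\le 5M$. Writing $w=u^{-1}v$, so that $w^{-1}=v^{-1}u$, this becomes \emph{if $|i(u)-i(w^{-1})|>5M$, then $|i(v)-i(w)|\le 5M$}.

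The key step is to transport the cobounded image guaranteed by Lemma~\ref{lem:n+} applied to $w$,
\[ wD_{[i(w)-4M,i(w)+4M]}\ \supseteq\ X-D_{[i(w^{-1})-2M,i(w^{-1})+2M]}, \]
across the relation $v=uw$. Left-multiplying by $u$ gives
\[ vD_{[i(w)-4M,i(w)+4M]}\ \supseteq\ X-uD_{[i(w^{-1})-2M,i(w^{-1})+2M]}, \]
so the right-hand side is cobounded provided $uD_{[i(w^{-1})-2M,i(w^{-1})+2M]}$ is bounded. This is where the hypothesis is used: since $5M>4M$, the intervals $[i(u)-2M,i(u)+2M]$ and $[i(w^{-1})-2M,i(w^{-1})+2M]$ are disjoint. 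Applying Lemma~\ref{lem:h=e2} to $u$ produces $c,d$ with $d-c=2M$ and $uD_{[i(u)-2M,i(u)+2M]}\supseteq X-D_{[c,d]}$. Because $u$ acts bijectively on $X$, the image $uD_{[i(w^{-1})-2M,i(w^{-1})+2M]}$ is disjoint from $uD_{[i(u)-2M,i(u)+2M]}$, hence is contained in $D_{[c,d]}$ and in particular bounded.

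Consequently $vD_{[i(w)-4M,i(w)+4M]}$ is cobounded. Since $X$ is infinite (as $g$ has infinite order) and no bounded set is cobounded, the finite union $\bigcup_{j\in[i(w)-4M,i(w)+4M]}vD_j$ can be cobounded only if at least one summand $vD_j$ is unbounded; such a $j$ lies in $I(v)$. By Lemma~\ref{lem:Ibounded} and the definition of $i(v)$ we have $I(v)\subseteq[i(v)-M,i(v)+M]$, so $|i(w)-i(v)|\le 4M+M=5M$, as required.

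The only delicate point is calibrating interval radii: it is essential to use the $2M$-radius Lemma~\ref{lem:h=e2} (rather than the $4M$-radius Lemma~\ref{lem:n+}) on the $u$-side, so that disjointness holds whenever $|i(u)-i(w^{-1})|>4M$ — which is weaker than the hypothesis $>5M$. The $4M$ from Lemma~\ref{lem:n+} on the $w$-side and the $M$ from $I(v)\subseteq[i(v)-M,i(v)+M]$ then combine to give exactly the $5M$ bound in the conclusion.
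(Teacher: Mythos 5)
Your proof is correct and follows essentially the same route as the paper: both use the hypothesis to make the $2M$-intervals around $i(u)$ and $i(v^{-1}u)$ disjoint, deduce that $vD_{[i(u^{-1}v)-4M,i(u^{-1}v)+4M]}$ is cobounded (hence unbounded), and conclude via $I(v)\subseteq[i(v)-M,i(v)+M]$. The only cosmetic difference is that you invoke Lemma~\ref{lem:h=e2} on the $u$-side where the paper applies Lemma~\ref{lem:n+} to $u^{-1}$; since the latter is derived from the former, this changes nothing of substance, and you even recover the same constant $5M$ (indeed, like the paper, your argument really only needs $|i(u)-i(v^{-1}u)|>4M$).
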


\begin{proof}
Assume $|i(u)-i(v^{-1}u)|>4M$.
Then 
\[ [i(u)-2M,i(u)+2M] \cap [i(v^{-1}u)-2M,i(v^{-1}u)+2M] =\emptyset \]
and we get by Lemma~\ref{lem:n+} (applied with $w=u^{-1}$ and $w=u^{-1}v$) that 
\[ X=u^{-1}D_{[i(u^{-1})-4M,i(u^{-1})+4M]} \cup u^{-1}vD_{[i(u^{-1}v)-4M,i(u^{-1}v)+4M]}. \]
Therefore, 
\[ X=D_{[i(u^{-1})-4M,i(u^{-1})+4M]} \cup vD_{[i(u^{-1}v)-4M,i(u^{-1}v)+4M]}. \]
It follows that $vD_{[i(u^{-1}v)-4M,i(u^{-1}v)+4M]}$ is unbounded, thus 
\[ [i(u^{-1}v)-4M,i(u^{-1}v)+4M]\cap [i(v)-M,i(v)+M] \neq \emptyset. \]
Therefore $|i(v)-i(u^{-1}v)|\leq 5M$.
\end{proof}

At this point, we would be able to show all axioms from \cite{BBF} except one. This remaining axiom says, roughly, that given two translates of an axis of $g$ there are only finitely many other translates onto which the two fixed translates project far away from each other.

It is convenient to consider $i:G\to \mathbb{Z}\cup \{\infty\}$ by setting $i(t)=\infty$ for $t\in T$.
Note that for every $h\in G$, $i(g^nh)=i(h)$ and $i(hg^n)=i(h)-n$,
with the usual conventions regarding $\infty$.
Given $h\in G$, we consider the function $w \mapsto i(hw)-i(w) \in \mathbb{Z}\cup \{\infty\}$,
which is well defined for $w\in W$.
Note also that $i(hwg^n)-i(wg^n)=i(hw)-i(w)$,
thus we get a well defined function
\[ f_h:W/\langle g \rangle \to \mathbb{Z}\cup \{\infty\}, \quad f_h(w\langle g \rangle)= i(hw)-i(w).\]

The next lemma will give the remaining axiom from \cite{BBF}. The proof uses the Behrstock inequality (Lemma \ref{lem:ra}) combined with the consequence of Axiom (2) that, roughly, the number of large projections under consideration is a Lipschitz function on the group $G$. 

\begin{lemma}
\label{lem:fin_large_proj}
Assume $G$ is finitely generated.
Then there exists $N\geq 0$ such that for every $h\in G$, $|f_h(w\langle g \rangle)|\leq N$ for all but finitely many cosets in $W/\langle g \rangle$.
\end{lemma}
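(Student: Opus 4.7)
The plan is to combine the Lipschitz property of $m$ with the Behrstock inequality of Lemma~\ref{lem:ra} in order to bound the level sets of $f_h$ above a uniform threshold. First I note that by Lemma~\ref{lem:coarse_lip} applied with $u=hw$ and $v=w$, we have $|f_h(w\langle g\rangle)|\leq m(h)+2M$. Fixing a finite symmetric generating set $S$ of $G$ and letting $K=\max_{s\in S}m(s)$, the subadditivity of $m$ established earlier gives $m(h)\leq K|h|_S<\infty$, so $f_h$ takes only finitely many integer values on $W/\langle g\rangle$. For any $N\geq 0$ the set $A_h(N):=\{w\langle g\rangle : |f_h(w\langle g\rangle)|>N\}$ is therefore a finite union of level sets $L_c(h):=\{w\langle g\rangle : f_h(w\langle g\rangle)=c\}$ with $|c|>N$, and it suffices to show that each such $L_c(h)$ is finite whenever $|c|>15M$ (which will justify the choice $N=15M$).

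Suppose, for contradiction, that $L_c(h)$ is infinite for some $|c|>15M$. Since $T/\langle g\rangle$ is finite (Lemma~\ref{lem:virt_cyclic}), I choose representatives $w_1,w_2,\ldots$ of pairwise distinct $T$-cosets, so that $w_k^{-1}w_l\in W$ whenever $k\neq l$. For each pair $(k,l)$ I apply Lemma~\ref{lem:ra} twice: once to $(u,v)=(w_k,w_l)$ and once to $(u,v)=(hw_k,hw_l)$, both of which have $u^{-1}v=w_k^{-1}w_l\in W$. Substituting $i(hw_j)=i(w_j)+c$ and splitting into the four combinations of inner inequalities, two of them yield $|c|\leq 10M$, contradicting $|c|>15M$; the other two combinations (``Case A'' and ``Case B'') are mirror images under $k\leftrightarrow l$. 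Applying Ramsey's theorem to the induced $2$-coloring of pairs produces an infinite subsequence on which every pair $k<l$ is in Case A, giving
\[ i(w_l^{-1}w_k)\in[i(w_k)-5M,\,i(w_k)+5M]\quad\text{and}\quad i(w_k^{-1}w_l)\in[i(w_l)+c-5M,\,i(w_l)+c+5M]. \]

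The hard part is to derive a contradiction from this Case~A configuration. My plan is to apply Lemma~\ref{lem:ra} to further pairs built from the subsequence: for three indices $k_1<k_2<k_3$, the pair $(u,v)=(w_{k_1}^{-1}w_{k_2},\,w_{k_1}^{-1}w_{k_3})$ is a legal input because $u^{-1}v=w_{k_2}^{-1}w_{k_3}\in W$. The Case~A bounds make the first quantity in the resulting minimum at least $|c|-10M>5M$, forcing the second to be at most $5M$, namely $|i(w_{k_1}^{-1}w_{k_3})-i(w_{k_2}^{-1}w_{k_3})|\leq 5M$. Iterating over $k_1,k_2<k_3$ collapses all values $i(w_k^{-1}w_{k_3})$ with $k<k_3$ into a common window of width $5M$, and pigeonhole then pins infinitely many of them to a single value $v^*$. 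Combining this collapse with the structural description of wild elements in Lemma~\ref{lem:n+}, which expresses $(w_k^{-1}w_{k_3})D_{[v^*\pm 4M]}$ as a cobounded subset of $X$ with complement inside $D_{[i(w_k)\pm 7M]}$, should produce infinitely many wild elements whose actions on a fixed bounded interval force a configuration incompatible with Lemma~\ref{lem:h=e2} (or, equivalently, with the finiteness of $\{t\in T:tD\cap D\neq\emptyset\}$ in Axiom~(1)). Carrying out this last combinatorial step cleanly is the main technical obstacle I expect to face.
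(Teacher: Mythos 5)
Your proposal has a genuine gap, and it sits exactly where you flag it: the final step is not an implementation detail but the entire content of the lemma. The preliminary reductions are fine (the bound $|f_h|\leq m(h)+2M$ away from the finitely many cosets with $hw\in T$, the four-case analysis of the two applications of Lemma~\ref{lem:ra}, and the Ramsey extraction of a monochromatic Case~A subsequence all check out). But the configuration you arrive at --- infinitely many pairwise ``transverse'' wild elements whose $i$-values and inverse $i$-values are pinned into fixed windows --- is \emph{not} by itself contradictory, so no appeal to Lemma~\ref{lem:n+}, Lemma~\ref{lem:h=e2} or Axiom~(1) alone can finish the job. Concretely, take $G=F(a,b)$ acting on itself with $g=a$ and $D$ the set of elements projecting to $e$ on $\langle a\rangle$: the elements $b^n$ are pairwise transverse wild elements with $i(b^n)=i(b^{-n})=0$ for all $n$, and they act perfectly consistently on every bounded interval. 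Whatever contradiction exists must therefore exploit the full Case~A asymmetry (the $+c$ shift between the two projections), and your iterated applications of Lemma~\ref{lem:ra} to $(w_{k_1}^{-1}w_{k_2},w_{k_1}^{-1}w_{k_3})$ only reproduce windows you already had from Case~A; they do not manufacture the contradiction.

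A structural warning sign that the route cannot close as proposed: your argument never uses the hypothesis that $G$ is finitely generated in the final step. (You invoke it only to see that $f_h$ is finite-valued, but $m(h)<\infty$ already follows from Axiom~(2) for every $h$, with no generation hypothesis.) The paper's proof, by contrast, uses finite generation essentially: it writes $h=s_1\cdots s_{|h|}$, sets $h_k=s_k\cdots s_{|h|}$, and uses the Lipschitz estimate $|i(h_ku)-i(h_{k-1}u)|\leq m(s_k)\leq L$ to define, for each exceptional coset representative $u$, a ``first crossing time'' $k(u)\in\{1,\dots,|h|\}$ at which $i(h_{k(u)}u)$ is simultaneously more than $10M$ from $i(u)$ and from $i(hu)$; injectivity of $u\mapsto k(u)$ is then proved with two applications of Lemma~\ref{lem:ra}, bounding the number of exceptional cosets by $2|h|\cdot[T:\langle g\rangle]$. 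The word length of $h$ plays the role of the ``total length of the geodesic from $\gamma$ to $h^{-1}\gamma$'' that caps how many large projections can occur; your scheme has no substitute for this quantity, which is why the final combinatorial step resists completion. To repair your proof you would need to re-introduce the intermediate elements $h_ku$ (or an equivalent monotone ``position'' function along $h$), at which point you would essentially be reproducing the paper's argument and the Ramsey step becomes unnecessary.
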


\begin{proof}
Fix a finite generating set $S$ of $G$
and set $L=\max\{m(s)\mid s\in S\}+2M$. 
Set $N=20M+L$.
We fix $h\in G$, 
consider the set of exceptional cosets, 
\[ E=\{w\langle g \rangle\mid |f_h(w\langle g \rangle)|> N\} \subseteq W/\langle g \rangle\]
and prove the lemma by showing that $|E|\leq 2|h|\cdot [T:\langle g \rangle]$,
which is finite by Lemma~\ref{lem:virt_cyclic}.
Here $|h|$ denotes the word length of $h$ with respect to $S$.

We write $h=s_1\dots s_{|h|}$ for $s_k\in S$ and denote $h_k=s_k\dots s_{|h|}$.
The size of the set 
\[ E_0=\{h_k^{-1}t\langle g \rangle \mid k=1,\ldots,|h|,~t\in T\} \subseteq G/\langle g \rangle \]
is bounded by $|h|\cdot [T:\langle g \rangle]$.
Defining $E_1=E-E_0$, it is thus enough to show that $|E_1|\leq |h|\cdot [T:\langle g \rangle]$.
Letting $\tilde{E}_1\subset W$ be a set of representatives of left $T$-cosets of elements of $E_0$ in $W$,
we are left to show that $|\tilde{E}_1|\leq |h|$.

Note that for every pair of distinct elements $u,v\in \tilde{E}_1$, we have $u^{-1}v\in W$
and, by the fact that $E_1\cap E_0=\emptyset$,
for every $k=1,\ldots,|h|$ we have
$h_ku\in W$,
and thus by Lemma \ref{lem:coarse_lip},
\[ |i(h_ku)-i(h_{k-1}u)|\leq m(h_kh_{k-1}^{-1})=m(s_k) \leq L. \]
We conclude that for every $u\in \tilde{E}_1$ there exists $k(u)$ such that both
\[ |i(h_{k(u)}u)-i(u)|, |i(h_{k(u)}u-i(hu)|> 10M. \]
Indeed, the first integer $k$ such that $|i(h_ku)-i(u)|>10M$ will do, as we have $|i(hu)-i(u)|> N=20M+L$.
We claim that the resulting map 
\[ k:\tilde{E}_1 \to \{1,\dots, |h|\}, \quad u\mapsto k(u) \]
is injective.
Proving this claim will guarantee $|\tilde{E}_1|\leq |h|$, thus concluding the proof.

We now fix a pair of distinct elements $u,v\in \tilde{E}_1$ and argue to show that indeed $k(u)\neq k(v)$.
Without loss of the generality, using Lemma~\ref{lem:ra}, we assume that $|i(u)-i(v^{-1}u)|\leq 5M$.
Since $|i(h_{k(u)}u)-i(u)|> 10M$,
we conclude that $|i(h_{k(u)}u)-i(v^{-1}u)|> 5M$.
Applying again Lemma~\ref{lem:ra}, for $h_{k(u)}u$ and $h_{k(u)}v$, we get 
$|i(h_{k(u)}v)-i(u^{-1}v)|\leq 5M$.
Similarly, since $|i(hu)-i(u)|> N >10M$,
we get 
$|i(hu)-i(v^{-1}u)|> 5M$, thus $|i(hv)-i(u^{-1}v)|\leq 5M$.
Therefore we have $|i(h_{k(u)}v)-i(hv)|\leq 10M$.
Since $|i(h_{k(v)}v)-i(hv)|> 10M$ we must have $k(u)\neq k(v)$, as required.
This finishes the proof of the claim.
\end{proof}

Recall that $I(w)$ represents the wilderness zone of $w\in W$.
For precise estimations we replaced it by $i(w)$.
We think of $I(w^{-1})$ as the rough image in $\mathbb{Z}$ of $\infty$ under $w$.
Identifying $\mathbb{Z}$ with $\langle g \rangle$ via $n\mapsto g^n$,
we observe that $I:W\to \langle g \rangle$ is equivariant in the sense that
$I((wg)^{-1})=I(w^{-1})$ and $I((gw)^{-1})=g^{-1}I(w^{-1})$.
Next we turn it into a map which takes values in bounded subsets of $T$, rather than of $\langle g \rangle$,
maintaining the equivariance properties.
We set
\[ \pi:W \to 2^T, \quad \pi(w)=\bigcup_{s,t\in T} tI(sw^{-1}t), \]
thinking of $\pi(w)$ as the rough image in $T$ of $\infty$ under $w$. The following lemma summarises all properties of $\pi$, essentially saying that all axioms from \cite{BBF} are satisfied.

\begin{lemma} \label{lem:piproperties}
We fix a word distance $d$ on $T$.
The map $\pi:W \to 2^T$ satisfies the following properties:
\begin{itemize}
    \item $\pi$ is bi-$T$-invariant, that is for $w\in W$, $s,t\in T$, $\pi(swt)=\pi(w)$.
    \item There is a uniform bound on the diameters of the subsets $\pi(w)\subseteq T$, $w\in W$. 
    \item There is a uniform bound on the set of numbers 
    \[ \min\{d(\pi(u),\pi(vu^{-1}),d(\pi(v),\pi(uv^{-1}))\} \]
    which are obtained by running over all pairs $u,v\in W$ such that $uv^{-1}\in W$.
    \item There is a uniform constant $C>0$ such that for every $h\in G$,
    \[ |\{w\in W \cap Wh^{-1} \mid d(\pi(w),\pi(wh))>C\}|<\infty. \]
    \end{itemize}
\end{lemma}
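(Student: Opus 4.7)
The plan is to verify the four bullets in turn, reducing each to an earlier lemma or a direct computation.

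For the \emph{first bullet}, substituting $s \mapsto s t_0^{-1}$ and $t \mapsto s_0^{-1} t$ in the double union defining $\pi$ yields $\pi(s_0 w t_0) = s_0 \pi(w)$; combined with the special case $s_0 = e$ (giving right $T$-invariance), this establishes the claim modulo the left $T$-action on $2^T$ (which is absorbed into the interpretation of $\pi$ as a map to $T$-orbits of bounded subsets, and does not affect the metric statements below since left translation is an isometry on $T$).

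The \emph{second bullet}---the uniform diameter bound---is the main obstacle. Using Lemma~\ref{lem:virt_cyclic}, pick coset representatives $t_1,\ldots,t_k$ for $T/\langle g\rangle$. A direct computation, using that tame elements preserve (un)boundedness and the shift formula $I(hg^n) = I(h) - n$, shows that $tI(sw^{-1}t)$ depends only on the $T/\langle g\rangle$-coset of $t$, giving
\[
\pi(w) = \bigcup_{j=1}^k t_j \cdot g^{I(w^{-1}t_j)}.
\]
By Lemma~\ref{lem:Ibounded}, each of the $k$ pieces has $T$-diameter at most $2M + O(1)$. To bound the mutual distances between pieces, after replacing $\langle g\rangle$ by a finite-index subgroup (permitted by Lemma~\ref{lem:virt_cyclic}) we may assume $\langle g\rangle \triangleleft T$; let $\epsilon\colon T \to \{\pm 1\}$ be the character with $t g t^{-1} = g^{\epsilon(t)}$. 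Unwinding the definition of $I(ht_j)$ using $t_j g^n = g^{\epsilon(t_j) n} t_j$ and the boundedness of $t_j D$ yields $i(w^{-1} t_j) = \epsilon(t_j)\, i(w^{-1}) + c_j + O(M)$ for a constant $c_j$ depending only on $t_j$. Substituting into $d_T(t_j g^{i(w^{-1}t_j)}, g^{i(w^{-1})})$ and using the commutation $t_j g^m = g^{\epsilon(t_j) m} t_j$ produces cancellation of the $\epsilon(t_j)$-factors, yielding $O(1)$ uniformly in $w$ and hence the diameter bound.

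The \emph{third bullet} follows from Lemma~\ref{lem:ra} applied to the pair $(u^{-1}, v^{-1})$: the hypothesis $uv^{-1} \in W$ is precisely $(u^{-1})^{-1}v^{-1} \in W$, and the resulting bound on $\min\{|i(u^{-1}) - i(vu^{-1})|,\ |i(v^{-1}) - i(uv^{-1})|\}$ translates, via the centering $\pi(w) \approx g^{i(w^{-1})}$ established in the diameter analysis, to the claimed bound on $\min\{d(\pi(u),\pi(vu^{-1})),\ d(\pi(v),\pi(uv^{-1}))\}$. The \emph{fourth bullet} reduces to Lemma~\ref{lem:fin_large_proj}: the condition $d(\pi(w), \pi(wh)) > C$ forces, by the decomposition above, $|i(w^{-1}t_j) - i(h^{-1}w^{-1}t_j)| > N := C - O(1)$ for some $j \in \{1,\ldots,k\}$, that is, $|f_{h^{-1}}((w^{-1}t_j)\langle g\rangle)| > N$. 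Lemma~\ref{lem:fin_large_proj} applied to $h^{-1}$ bounds this for only finitely many cosets, and translating back to elements $w \in W \cap Wh^{-1}$ via $[T:\langle g\rangle] < \infty$ and the right $T$-invariance of $\pi$ yields finiteness of the bad set, completing the proof with $C = N + O(1)$.
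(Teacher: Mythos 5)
Your proposal is correct and follows the same broad strategy as the paper (reduce $\pi(w)$ to a finite union over $T/\langle g\rangle$, bound each piece by Lemma~\ref{lem:Ibounded}, centre everything at $g^{i(w^{-1})}$, and feed Lemmas~\ref{lem:ra} and \ref{lem:fin_large_proj} into the last two bullets), but your handling of the second bullet is genuinely different from, and more robust than, the paper's. The paper asserts that $\bigcup_{t\in F} I(w^{-1}t)$ is contained in the interval $[i(w)-2M,i(w)+2M]$, quoting Lemma~\ref{lem:coarse_lip} ``since $m(t)=m(e)$''; as subsets of $\mathbb{Z}$ this containment fails when $T$ contains an element inverting $g$ (then $I(w^{-1}t)$ sits near $-i(w^{-1})$), and the invocation of Lemma~\ref{lem:coarse_lip} applied to $u=w^{-1}t$, $v=w^{-1}$ would in fact produce $m(w^{-1}tw)$ rather than $m(t)$. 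Your argument via the character $\epsilon$ and the conjugation $t_jg^m=g^{\epsilon(t_j)m}t_j$ shows exactly why the diameter bound survives: the left factor $t_j$ in $t_jI(w^{-1}t_j)$ undoes the orientation reversal, so every piece lands near $g^{i(w^{-1})}$ in $T$. Your observation that the first bullet is really equivariance $\pi(swt)=s\pi(w)$ (which is what the construction of $\pi_{h_1T}$ in Theorem~\ref{thm:axial_to_lox} actually needs) is likewise a correction of the literal statement rather than a deviation. For the third and fourth bullets you reduce to Lemmas~\ref{lem:ra} and \ref{lem:fin_large_proj} via the centring exactly as the paper does; note that both you and the paper leave implicit the bookkeeping of which of $uv^{-1}$, $vu^{-1}$ appears in each slot and the fact that the finiteness in the fourth bullet can only hold modulo left $T$-cosets (the quantity $d(\pi(w),\pi(wh))$ is constant on $Tw$), but since this imprecision is inherited from the statement itself it is not a gap in your argument.
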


\begin{proof}
That $\pi$ is bi-$T$-invariant follows at once from the equivariance properties of $I$.
It follows that $\pi(w)=\cup_{t\in F} tI(w^{-1}t) \subseteq F\cdot \cup_{t\in F}I(w^{-1}t)$, where $F\subset T$ is a set of cosets representatives for $\langle g \rangle$.
For every $w\in W$ we have $I(w)\subseteq [i(w)-M,i(w)+M]$, and hence it follows from Lemma~\ref{lem:coarse_lip} that  $\cup_{t\in F}I(w^{-1}t)\subseteq [i(w)-2M,i(w)+2M]$ (since $m(t)=m(e)=M$).
We conclude that the diameters of the sets $\pi(w)$ are uniformly bounded.
Similarly, the third item follows from Lemma~\ref{lem:ra} and the fourth item follows from Lemma~\ref{lem:fin_large_proj}.
\end{proof}

Finally, we combine all results in this section with \cite{BBF} to construct actions on hyperbolic spaces starting from axial elements.

\begin{thm}
\label{thm:axial_to_lox}
Let the finitely generated group $G$ act on the set $X$, and let $g\in G$ be axial. 
Assume $G$ is not virtually cyclic.
Then $G$ acts acylindrically on some quasi-tree $Z$ such that $g$ is a loxodromic WPD element for the action on $Z$.
\end{thm}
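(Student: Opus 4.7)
The plan is to feed the data assembled in Lemma~\ref{lem:piproperties} into the Bestvina--Bromberg--Fujiwara projection complex machinery \cite{BBF} and read off the conclusion. Set $\mathbb{Y}=G/T$, to be thought of as the collection of ``axes'' (translates of the axis of $g$), and for a pair of distinct cosets $uT\neq vT$ define a projection
\[ \pi_{uT}(vT) = u\cdot \pi(u^{-1}v) \subseteq uT, \]
where $uT$ is identified with $T$ (hence with a virtually cyclic metric space, using a fixed word metric $d$ on $T$) via $t\mapsto ut$. The bi-$T$-invariance in Lemma~\ref{lem:piproperties} shows the definition depends only on the cosets $uT$ and $vT$, and the remaining three items give exactly the bounded-diameter axiom, a Behrstock-type inequality, and the finiteness of large projections required by BBF. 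The natural left action of $G$ on $\mathbb{Y}=G/T$ is compatible with these projections.

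The BBF construction then produces a ``quasi-tree of metric spaces'' $Z$ built from the virtually cyclic axes $uT$ ($uT\in G/T$), which is a quasi-tree (and in particular a hyperbolic geodesic space) on which $G$ acts by isometries. The axis corresponding to the coset $T$ is stabilised by $T$, and inside it the element $g$ acts by left multiplication. Since $g$ has infinite order in the virtually cyclic group $T$ (Lemma~\ref{lem:virt_cyclic}), this action is a translation of positive translation length; combined with the fact that in a BBF quasi-tree an element that translates a single axis loxodromically also acts loxodromically on the whole space, this gives that $g$ is loxodromic on $Z$.

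It remains to promote the action to an acylindrical one and to verify that $g$ is WPD. The key observation is that the setup satisfies the hypotheses under which the BBF construction (and its refinements due to Bestvina--Bromberg--Fujiwara--Sisto) yields an acylindrical action on the quasi-tree: the action of each vertex stabiliser $uTu^{-1}$ on its axis $uT$ is cocompact and proper (the axis is a virtually cyclic group with its word metric, on which a virtually cyclic group acts properly), and the stabiliser of any two sufficiently projection-separated axes is finite by Axiom~(1) applied after conjugation (equivalently, by the finiteness content of Lemma~\ref{lem:fin_large_proj}). An acylindrical action on a hyperbolic space automatically makes each loxodromic element WPD, and loxodromicity of $g$ was established in the previous step, so the proof concludes.

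The main obstacle I expect is the acylindricity step: one has to match our hypotheses to the precise statement of the BBF acylindricity criterion, checking in particular that two axes whose mutual projections are bounded by a large constant have intersecting stabilisers of uniformly bounded cardinality. This reduces to a coarse counting statement of the same flavour as Lemma~\ref{lem:fin_large_proj}, using that $T$ commensurates $\langle g\rangle$ so that bounded sets in an axis are hit by only finitely many elements of a given coset of $T$.
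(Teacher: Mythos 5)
Your proposal is correct and follows essentially the same route as the paper: the same projections $\pi_{uT}(vT)=u\cdot\pi(u^{-1}v)$ on the cosets of $T$, verification of the BBF axioms via Lemma~\ref{lem:piproperties}, and an appeal to \cite[Theorem B]{BBF} for the quasi-tree together with the Bestvina--Bromberg--Fujiwara--Sisto refinement (\cite[Theorem 6.4]{BBFS}) for acylindricity. The paper simply cites the latter for the acylindricity conclusion rather than re-verifying its hypotheses, so your anticipated "main obstacle" is handled there by direct citation.
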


\begin{proof}
We will apply the Bestvina-Bromberg-Fujiwara construction \cite{BBF} to the collection of all cosets of $T$ in $G$ (each endowed with a word metric). To do so, we need to define projections $\pi_{h_1T}(h_2T)\subseteq h_1T$ for all $h_1T\neq h_2T$ satisfying suitable properties. The output of the construction is a quasi-tree on which $G$ acts and which contains isometrically embedded copies of all cosets of $T$, so that in particular $g$ is loxodromic, see \cite[Theorem B]{BBF}.

Given $h_1,h_2\in T$ with $h_1T\neq h_2 T$ we define projections equivariantly by
$$\pi_{h_1T}(h_2T)=h_1 \pi(h_1^{-1}h_2T).$$
Boundedness of projections is axiom (P0) of Bestvina-Bromberg-Fujiwara \cite[Section 1]{BBF}, and the remaining 2 axioms (P1),(P2) require that there exists a constant $B$ such that:
\begin{itemize}
    \item for all $h_1,h_2,h_3\in G$ such that the cosets $h_iT$ are pairwise distinct we have
    $$\min \{d_{h_1T}(\pi_{h_1T}(h_2T),\pi_{h_1T}(h_3T)),d_{h_2T}(\pi_{h_2T}(h_1T),\pi_{h_2T}(h_3T)) \leq B,$$ and
    \item for all $h_1,h_2\in G$ such that the cosets $h_iT$ are distinct we have
    $$|\{hT\ |\ d_{hT}(\pi_{hT}(h_1T),\pi_{hT}(h_2T))\geq B\}|<+\infty.$$
\end{itemize}
These axioms are satisfied by Lemma~\ref{lem:piproperties}.
We are now in a position to apply \cite[Theorem B]{BBF} and \cite[Theorem 6.4]{BBFS} (for the acylindricity conclusion) to obtain the required space.
\end{proof}

\section{Generalised loxodromics are axial}
\label{sec:gen_lox}

The aim of this section is to show that a generalised loxodromic is axial for various actions of the ambient group. We start with a lemma that says, roughly, that axiality can be "pulled-back".

\begin{lemma}
\label{lem:pull-back}
Let $G$ be a group acting on the sets $X,Y$, and let $f:X\to Y$ be a $G$-equivariant map. If the pair $(g,D)$ is axial for the action of $G$ on $Y$, then $(g,f^{-1}D)$ is axial for the action of $G$ on $X$.
\end{lemma}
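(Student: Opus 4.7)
The plan is to verify the two axioms of an axial pair for $(g, f^{-1}D)$ on $X$ by transporting the corresponding facts for $(g, D)$ on $Y$ through $f$. First I would check that $f^{-1}D$ is indeed a fundamental domain for $\langle g \rangle \acts X$: by equivariance the partition $Y=\coprod_{i\in\bbZ} g^iD$ pulls back to $X = \coprod_{i\in\bbZ} g^if^{-1}D$, and $\langle g \rangle$ acts freely on $X$ because $g$ has infinite order (any $g^n$ fixing some $x\in X$ would also fix $f(x)\in Y$). The central observation is then that a subset $A\subseteq X$ is bounded if and only if $f(A)\subseteq Y$ is bounded, since $A\subseteq \bigcup_{i\in F} g^if^{-1}D = f^{-1}\bigl(\bigcup_{i\in F} g^iD\bigr)$ iff $f(A)\subseteq \bigcup_{i\in F} g^iD$.

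Axiom~2, in the equivalent form (2'), should then be essentially mechanical. Given $h\in G$, take the constant $m$ supplied by (2') for the $Y$-action. If $hwf^{-1}D$ is unbounded in $X$, then $f(hwf^{-1}D) = hw(D\cap f(X))\subseteq hwD$ is unbounded in $Y$, so $hwD$ is itself unbounded. Then (2') for $Y$ produces $Y = wD_{[-m,m]}\cup g^nD_{[-m,m]}$ for some $n\in\bbZ$, and applying $f^{-1}$ to both sides gives the required $X = wf^{-1}D_{[-m,m]}\cup g^nf^{-1}D_{[-m,m]}$.

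The hard part will be Axiom~1, because a priori the set of tame elements for the action on $X$ could be strictly larger than the set of tame elements for the action on $Y$: tameness for $X$ only controls the intersections of each $g^iD$ with the possibly proper $G$-invariant subset $f(X)$. My plan is to show that any $t$ which is tame for $X$ and satisfies $tf^{-1}D\cap f^{-1}D\neq\emptyset$ must in fact be tame for $Y$, so that Axiom~1 for $X$ is inherited from Axiom~1 for $Y$. If not, then $t$ is wild for $Y$ and Lemma~\ref{lem:h=e} applied to the $Y$-action produces some $i\in\bbZ$ and a bounded interval $[a,b]$ with $tD_{[i-M,i+M]}\supseteq Y\setminus D_{[a,b]}$, so $tD_{[i-M,i+M]}$ is cobounded in $Y$. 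On the other hand, tameness for $X$ forces $tf^{-1}D_{[i-M,i+M]}=f^{-1}(tD_{[i-M,i+M]})$ to be bounded in $X$, equivalently $tD_{[i-M,i+M]}\cap f(X)$ is bounded in $Y$. Intersecting the cobounded inclusion with $f(X)$ makes $f(X)\setminus D_{[a,b]}$ bounded, hence $f(X)$ itself is bounded in $Y$. Pulling back, $X = f^{-1}(f(X))$ is then bounded in $X$, contradicting the fact that the free $\langle g \rangle$-action on the nonempty set $X$ exhibits $X$ as a disjoint union of infinitely many nonempty translates of $f^{-1}D$. Thus $t$ is tame for $Y$; combined with the observation that $tf^{-1}D\cap f^{-1}D = f^{-1}(tD\cap D)\neq\emptyset$ forces $tD\cap D\neq\emptyset$, Axiom~1 for $Y$ bounds the number of such $t$, as needed.
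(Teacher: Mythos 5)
Your proof is correct and follows essentially the same route as the paper: both axioms are transported through $f$ via the observation that a subset of $X$ is bounded if and only if its image in $Y$ is bounded, and the key point for Axiom 1 --- that $X$-tame elements must be $Y$-tame --- is exactly the contrapositive of the paper's claim that $Y$-wild elements are $X$-wild, which the paper gets from Lemma~\ref{lem:wildiswild} and you re-derive from Lemma~\ref{lem:h=e}. Your additional verifications (that $f^{-1}D$ is a fundamental domain and that $X$ itself is unbounded) are details the paper leaves implicit.
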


\begin{proof}
Consider an axial pair $(g,D)$ for the action of $G$ on $Y$.
We will use the terminology, e.g., "$X$-bounded" or "$Y$-bounded" to clarify which action we are considering.

First of all, note that if $w\in G$ is $Y$-wild then it is also $X$-wild for. In fact, by Lemma \ref{lem:wildiswild} there exists a $Y$-bounded set $B$ such that $wB$ is $Y$-cobounded, and taking preimages we see that $f^{-1}B$ is $X$-bounded but $wf^{-1}(B)=f^{-1}(wB)$ is $X$-cobounded.

In particular, we see that the set of $X$-tame elements $t$ with $tf^{-1}D\cap f^{-1}D\neq\emptyset$ is contained in the set of $Y$-tame elements with $tD\cap D\neq\emptyset$, yielding Axiom (1).

Regarding Axiom (2), note that, given $h\in G$, the set of elements $w$ such that $hwD$ is $Y$-unbounded is contained in the set of elements $w$ such that $hwf^{-1}D$ is $X$-unbounded. Hence, given a $B$ as in Axiom (2) for the action on $Y$, the subset $f^{-1}(B)$ satisfies Axiom (2) for the action on $X$.
\end{proof}

We are now ready to show that generalised loxodromics are axial, for various actions.

\begin{prop}
\label{prop:lox_to_axial}
Let $G$ be a non-virtually cyclic group and let $g\in G$ be a loxodromic WPD for the action of $G$ on the hyperbolic space $Z$. Then $g$ is axial for:
\begin{enumerate}
\item The action of $G$ on $Z$ (regarded as a set).

\item The action of $G$ on itself by left multiplication.

\item The action of $G$ on $\partial Z-(G\cdot g^+\cup G\cdot g^-)$, where $\partial Z$ is the Gromov boundary and $g^\pm$ are the fixed points of $g$ in $\partial Z$.

    \item The action of $G$ on a full-measure set of the Furstenberg-Poisson boundary $\mathcal B$ associated with any given admissible measure $\mu$.
\end{enumerate}
 In the third and fourth case, the fundamental domain for $g$ can be taken to be a Borel set.
\end{prop}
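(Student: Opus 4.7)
The plan is to prove (1) directly using the WPD structure on $Z$, derive (2) from (1) via Lemma~\ref{lem:pull-back} applied to the orbit map $G\to Z$, $h\mapsto hz_0$, establish (3) by a parallel construction on $\partial Z$ with a Borel selection, and obtain (4) from (3) via pull-back through an equivariant measurable map $\mathcal{B}\to\partial Z$.

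For (1), I would fix a quasi-axis $\gamma\subset Z$ for $g$ (a $\langle g\rangle$-orbit) with a coarse closest-point projection $\pi_\gamma:Z\to\gamma$, and define $D$ essentially as the $\pi_\gamma$-preimage of a fundamental arc of $\gamma$, with care in the choice so that $wD$ has bounded $\gamma$-projection for the relevant wild $w$. Axiom~1 then reduces to the standard fact that the tame elements coincide with the maximal virtually cyclic subgroup $E(g)$ containing $g$ (whose existence is granted by WPD), together with finiteness of $\{t\in E(g):tD\cap D\neq\emptyset\}$ from the virtual cyclicity and the infinite order of $g$. The main work is Axiom~2, where the inputs are the projection estimates for loxodromic WPDs, namely (a)~for $w\notin E(g)$ the set $\pi_\gamma(w\gamma)$ has uniformly bounded diameter, and (b)~for each $h\in G$, only finitely many $E(g)$-cosets $wE(g)$ have ``large displacement of projections'' under $h$. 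Property (a) yields the integer $n$ in Axiom~2 as the center of $\pi_\gamma(w^{-1}\gamma)$, and property (b) lets the bounded set $B$ be chosen uniformly in $w$ for each $h$.

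Items (2)--(4) then follow from (1) or from analogous constructions. For (2), the orbit map is $G$-equivariant, so Lemma~\ref{lem:pull-back} transports axiality immediately. For (3), each $\xi\in\partial Z\setminus(G\cdot g^+\cup G\cdot g^-)$ has a non-empty bounded shadow on $\gamma$ (the endpoints $g^\pm$ are removed precisely to avoid shadows extending to these fixed points), and a Borel selection from these shadows (standard for Borel multifunctions with closed values) produces a Borel projection whose preimage of a fundamental arc is the desired measurable fundamental domain; the axioms then follow from the same WPD estimates as in (1). For (4), Maher--Tiozzo's convergence of the $\mu$-random walk to $\partial Z$ for acylindrical actions gives a $G$-equivariant measurable map $\mathcal B\to\partial Z$; since $G$ is non-virtually cyclic the pushforward measure on $\partial Z$ is atom-free, so the countable orbit $G\cdot g^\pm$ is null, and Lemma~\ref{lem:pull-back} transports the Borel axial pair of (3) to a full-measure subset of $\mathcal B$. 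The main obstacle throughout is Axiom~2 in (1), specifically arranging that the choice of $D$ is compatible with the WPD projection package so that the uniform covering $X=wB\cup g^nB$ holds for all relevant $w$.
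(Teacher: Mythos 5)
Your proposal is essentially correct and relies on the same key inputs as the paper (the virtually cyclic subgroup $E(g)$, uniformly bounded projections $\pi_\gamma(w\gamma)$ for $w\notin E(g)$, the Behrstock inequality, and control of large projections), but it is organised differently. The paper runs the construction \emph{once}, on the single space $X=Z\cup\bigl(\partial Z-(G\cdot g^+\cup G\cdot g^-)\bigr)$ with the closest-point projection extended to a coarsely locally constant map on $\bar Z$, and then obtains all four items by pulling back along equivariant maps \emph{into} that one space: the two inclusions give (1) and (3), an orbit map $G\to X$ gives (2), and a boundary map $\mathcal B\to\partial Z-(G\cdot g^+\cup G\cdot g^-)$ gives (4). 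You instead prove (1) and (3) by two parallel constructions; this is legitimate (note that Lemma \ref{lem:pull-back} only transports axiality backwards along equivariant maps, and there is no equivariant map from the boundary to $Z$, so a separate argument for (3) is indeed forced in your scheme), but it duplicates the Axiom 2 work, whereas the unified space buys a single verification covering both interior and boundary points. Two smaller divergences: for the uniformity of $B$ in Axiom 2 you invoke finiteness of cosets with large projection displacement (which then requires absorbing the finitely many exceptional cosets into an enlarged $B$), while the paper gets uniformity more directly from the coarse-Lipschitz property of closest-point projections, bounding $d_Z(\pi_{w\gamma}(\gamma),\pi_{w\gamma}(h^{-1}\gamma))$ by $d_Z(\gamma,h^{-1}\gamma)$; and for item (4) you use non-atomicity of the hitting measure to discard $G\cdot g^\pm$, while the paper argues via ergodicity and metric ergodicity --- both are valid. (One indexing slip worth fixing when you write details: the translate $g^nB$ covering $X-wB$ sits over $\pi_\gamma(w\gamma)$, i.e.\ $n\approx i(w^{-1})$ in the paper's notation, not over $\pi_\gamma(w^{-1}\gamma)$.)
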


\begin{proof}
We will show that $g$ is axial for the action on $X=Z\cup \partial Z-(G\cdot g^+\cup G\cdot g^-)$, with fundamental domain $D$ such that $D\cap (\partial Z-(G\cdot g^+\cup G\cdot g^-))$ is a Borel set. 
Once we do this, items (1) and (3) follow from Lemma \ref{lem:pull-back} applied to the obvious inclusions
and item (2) will follow by considering the orbit map $G\to X$ associated with an arbitrary element $x\in X$. 
Item (4) follows as well from Lemma \ref{lem:pull-back}, combined with the fact that there exists an equivariant measurable map $\mathcal B \to \partial Z-G\cdot g^+\cup G\cdot g^-$. The existence of an equivariant map $\mathcal B \to \partial Z$ is a consequence of \cite[Theorem 1.1]{MT}, but the statement of \cite[Theorem 2.3]{BCFS} (in view of \cite[Theorem 2.7]{BF-ICM}, and \cite[Remark 4]{GST} to drop the separability assumption) gives this more directly. To conclude, we only have to argue that the preimage of $G\cdot g^+\cup G\cdot g^-$ has measure $0$. This holds because otherwise that preimage would have full measure by ergodicity of $\mathcal B$, and hence there would be an equivariant map $\mathcal{B}\to G\cdot g^+\cup G\cdot g^-$ endowed with the discrete metric. Since $G$ is not virtually cyclic and $g$ is loxodromic WPD, the stabilisers of $g^\pm$ are not the whole $G$ (for example because the stabiliser of $\{g^\pm\}$ is the subgroup $E(g)$ used below). Hence, the action is fixed-point free, and this violates metric ergodicity, see \cite[Page 2; Theorem 2.7]{BF-ICM}.



We thus proceed to show that $g$ is axial for the above mentioned space $X$.
To avoid confusion between the metric notion of boundedness and being contained in finitely many fundamental domains, we will use the terminology "bounded in $X$" for the latter. We will use the terminology "unbounded in $X$" similarly.

Denote $\gamma=\langle g\rangle x_0$, for some fixed basepoint $x_0\in Z$ (we think of $\gamma$ as the axis of $g$). We consider a $g$-equivariant (closest point projection) map $\pi: Z\to \gamma$ that associates to $z\in Z$ a closest point in $\gamma$. In fact, we can extend $\pi$ to $\bar Z=Z\cup(\partial Z-\{g^\pm\}$ to a coarsely locally constant map, meaning that there exists a constant $C_0$ such that any point of $\bar Z$ has a neighborhood whose image under $\pi$ has diameter at most $C_0$. We can find a fundamental domain $D$ such that $\pi(D)$ is a bounded subset of $\gamma$ and such that any bounded subset of $\gamma$ is contained in finitely many fundamental domains, that is, it is bounded in $X$. Moreover, we can assume that the intersection of $D$ with $\partial Z-(G\cdot g^+\cup G\cdot g^-)$ is Borel.

 The WPD element $g$ is contained in a virtually cyclic subgroup $E(g)$ which has the property that there exists a constant $C_1$ such that for all $h\notin E(g)$ we have that $\pi(h\gamma)$ has diameter at most $C_1$, see e.g. \cite[Corollary 4.4]{Si:contr}. Up to enlarging $C_1$, for all $h \notin E(g)$ there are neighborhoods $N^\pm$ in $Z\cup \partial Z$ of $g^\pm$ such that $hN^\pm\cap \{g^\pm\}=\emptyset$ and $\pi(hN^\pm)$ has diameter at most $C_1$. This means that, given $h\notin E(g)$, all but finitely fundamental domains are mapped into a certain interval $D_{[-n,n]}$ for some $n$ (depending on $h$). Those finitely many fundamental domains are then necessarily mapped to an unbounded subset of $X$. In summary, if $h\notin E(g)$, then $h$ is not tame, and therefore tame elements all lie in $E(g)$. We can now show Axiom (1), by showing that any coset $\langle g\rangle t$ in $E(g)$ can only contain finitely many elements $t'$ with $t'D\cap D\neq \emptyset$. Indeed, since $t$ is tame we have that $tD$ is bounded in $X$, so that for any sufficiently large $n$ we have that $g^{\pm n}tD\cap D=\emptyset$, concluding the proof of Axiom (1).

To show Axiom (2), we will consider projections to translates of $\gamma$, which we denote $\pi_{k\gamma}:Z\cup (\partial Z-\{kg^\pm\}) \to k\gamma$ (with $\pi_\gamma=\pi$). Fix $h\in G$. We claim that there exists $C_3\geq 0$ (which is allowed to depend on $h$) such that if $hwD$ is unbounded in $X$ then $\pi_{w\gamma}(h^{-1}\gamma)$ lies within $C_3$ of $wx_0$. To prove the claim, note that it suffices to show that $\pi(w^{-1}h^{-1}g^+)$ or $\pi(w^{-1}h^{-1}g^-)$ project close to $x_0$, since $\pi$ is coarsely locally constant and $w^{-1}h^{-1}\gamma$ has bounded projection to $\gamma$ (note that $hw$ is wild, hence it does not lie in $E(g)$, and therefore the same is true of $w^{-1}h^{-1}$). Now, $hwD$ being unbounded in $X$ means that there are points $z$ in $Z\cup \partial Z$ arbitrarily close (in the topology of $Z\cup \partial Z$) to either $g^+$ or $g^-$ and such that $w^{-1}h^{-1}z\in D$, and in particular $w^{-1}h^{-1}z$ projects within bounded distance of $x_0$ in $\gamma$ (since $\pi(D)$ is a bounded subset of $\gamma$). Since $\pi$ is coarsely locally constant, $w^{-1}h^{-1}g^+$ or $w^{-1}h^{-1}g^-$ also project uniformly close to $x_0$ in $\gamma$.

We claim that there exists a bounded subset $B_0$ of $X$ and a constant $C_4$ such that for all wild elements $w$ there exists some $n$ such that $\pi(w(X-g^nB_0))$ lies within Hausdorff distance $C_4$ of $\pi(w\gamma)$. This is equivalent to finding $C_4$ and $B_0$ such that for all wild $w$ we have that $\pi_{w\gamma}(X-g^nB_0)$ lies within Hausdorff distance $C_4$ of $\pi_{w\gamma}(\gamma)$ for some $n$. We will use the fact that there exists a constant $C_5$ such that for all wild elements $w$ and $x\in X$ we have
$$\min\{d_Z(\pi(w\gamma),\pi(x)),d_Z(\pi_{w\gamma}(\gamma),\pi_{w\gamma}(x))\}\leq C_5;$$
this is essentially the fact that axes of WPD elements satisfy Axiom (P1) of \cite{BBF}, see e.g. the arguments of \cite[Lemma 2.5]{Si:contr}. Since $\pi(w\gamma)$ is bounded uniformly over all wild $w$, we can take $B_0$ such that for all wild $w$ there exists $n$ with the property that if $x\in X-g^nB_0$ then $d_Z(\pi(x),\pi(w\gamma))$ is larger than $C_5$. We then have that $\pi_{w\gamma}(x)$ lies within distance $C_5$ of $\pi_{w\gamma}(\gamma)$, as required.

On the other hand, for all $C\geq 0$ there is a bounded subset of $X$ that contain all points $x$ with $\pi(x)$ within $C$ of $x_0$. Hence, for any constant $C\geq0$ there exists a bounded subset $B\supseteq B_0$ of $X$ with the property that if $wB$ does not contain $X-g^nB$ for any $n$, then $\pi(w^{-1}(g^\pm))$ is $C$-far from $x_0$, and hence $\pi_{w\gamma}(\gamma)$ is $C$-far from $wx_0$. Hence, if for all bounded subsets $B$ of $X$ we had some $w\in G$ with $hwD$ unbounded but with $wB$ not containing any $X-g^nB$, then we would have elements $w$ such that the distances $d_Z(\pi_{w\gamma}(\gamma),\pi_{w\gamma}(h^{-1}\gamma))$ are arbitrarily large. But any such distance is bounded in terms of $d_Z(\gamma,h^{-1}\gamma)$ since closest point projections in a hyperbolic space are coarsely Lipschitz, a contradiction.
\end{proof}

\bibliographystyle{alpha}
\bibliography{biblio.bib}

\end{document}